\documentclass[12pt,a4paper]{article}

\usepackage{hyperref}
\usepackage{csquotes}
\usepackage[bbgreekl]{mathbbol}
\usepackage{amsfonts}
\usepackage{amsmath}
\usepackage{amssymb}
\DeclareSymbolFontAlphabet{\mathbb}{AMSb}
\DeclareSymbolFontAlphabet{\mathbbl}{bbold}

\usepackage[greek,english,ngerman]{babel}
\usepackage{graphicx}
\usepackage{amsmath}
\usepackage{verbatim}
\usepackage{color}
\usepackage{amstext}
\usepackage{latexsym}
\usepackage[small,it]{caption}
\usepackage{dsfont}
\usepackage{enumerate,enumitem}
\usepackage{mathtools}
\usepackage{indentfirst}
\usepackage{subcaption}
\usepackage{eurosym}
\usepackage{stmaryrd}

\usepackage[h]{esvect} 

\usepackage{bm}
\usepackage{amsthm}

\usepackage{url}





\makeatletter
\def\@endtheorem{\endtrivlist}
\makeatother
\setlength{\captionmargin}{15pt}
\setcounter{MaxMatrixCols}{10}

\theoremstyle{plain}
\newtheorem*{theorem*}{\protect\TheoremName}
\newtheorem*{acknowledgement*}{\protect\AcknowledgmentName}
\newtheorem*{algorithm*}{\protect\AlgorithmName}
\newtheorem*{assumption*}{\protect\AssumptionName}
\newtheorem*{assumptions*}{\protect\AssumptionsName}
\newtheorem*{axiom*}{\protect\AxiomName}
\newtheorem*{condition*}{\protect\ConditionName}
\newtheorem*{conditions*}{\protect\ConditionsName}
\newtheorem*{case*}{\protect\CaseName}
\newtheorem*{claim*}{\protect\ClaimName}
\newtheorem*{conclusion*}{\protect\ConclusionName}
\newtheorem*{conjecture*}{\protect\ConjectureName}
\newtheorem*{corollary*}{\protect\CorollaryName}
\newtheorem*{criterion*}{\protect\CriterionName}
\newtheorem*{definition*}{\protect\DefinitionName}
\newtheorem*{lemma*}{\protect\LemmaName}
\newtheorem*{notation*}{\protect\NotationName}
\newtheorem*{problem*}{\protect\ProblemName}
\newtheorem*{proposition*}{\protect\PropositionName}
\newtheorem*{solution*}{\protect\SolutionName}
\newtheorem*{summary*}{\protect\SummaryName}

\newtheorem{theorem}{\protect\TheoremName}[section]

\newtheorem{corollary}[theorem]{\protect\CorollaryName}

\newtheorem{definition}[theorem]{\protect\DefinitionName}
\newtheorem{lemma}[theorem]{\protect\LemmaName}

\theoremstyle{definition}
\newtheorem*{example*}{\protect\ExampleName}
\newtheorem*{exercise*}{\protect\ExerciseName}
\newtheorem*{remark*}{\protect\RemarkName}
\newtheorem{example}[theorem]{\protect\ExampleName}

\newtheorem{remark}[theorem]{\protect\RemarkName}
\newtheorem*{question*}{\protect\QuestionName}
\newtheorem*{questions*}{\protect\QuestionsName}

\newcommand{\TheoremName}{}
\newcommand{\AcknowledgmentName}{}
\newcommand{\AlgorithmName}{}
\newcommand{\AssumptionName}{}
\newcommand{\AssumptionsName}{}
\newcommand{\QuestionName}{}
\newcommand{\QuestionsName}{}
\newcommand{\AxiomName}{}
\newcommand{\ConditionName}{}
\newcommand{\ConditionsName}{}
\newcommand{\CaseName}{}
\newcommand{\ClaimName}{}
\newcommand{\ConjectureName}{}
\newcommand{\ConclusionName}{}
\newcommand{\CorollaryName}{}
\newcommand{\CriterionName}{}
\newcommand{\DefinitionName}{}
\newcommand{\LemmaName}{}
\newcommand{\NotationName}{}
\newcommand{\ProblemName}{}
\newcommand{\PropositionName}{}
\newcommand{\SolutionName}{}
\newcommand{\SummaryName}{}
\newcommand{\ExampleName}{}
\newcommand{\RemarkName}{}
\newcommand{\ExerciseName}{}
\addto\captionsenglish{%
  \renewcommand{\TheoremName}{Theorem}
  \renewcommand{\AcknowledgmentName}{Acknowledgments}
  \renewcommand{\AlgorithmName}{Algorithm}
  \renewcommand{\AssumptionName}{Assumption}
  \renewcommand{\AssumptionsName}{Assumptions}
  \renewcommand{\QuestionName}{Question}
  \renewcommand{\QuestionsName}{Questions}
  \renewcommand{\AxiomName}{Axiom}
  \renewcommand{\ConditionName}{Condition}
  \renewcommand{\ConditionsName}{Conditions}
  \renewcommand{\CaseName}{Case}
  \renewcommand{\ClaimName}{Claim}
  \renewcommand{\ConjectureName}{Conjecture}
  \renewcommand{\ConclusionName}{Conclusion}
  \renewcommand{\CorollaryName}{Corollary}
  \renewcommand{\CriterionName}{Criterion}
  \renewcommand{\DefinitionName}{Definition}
  \renewcommand{\LemmaName}{Lemma}
  \renewcommand{\NotationName}{Notation}
  \renewcommand{\ProblemName}{Problem}
  \renewcommand{\PropositionName}{Proposition}
  \renewcommand{\SolutionName}{Solution}
  \renewcommand{\SummaryName}{Summary}
  \renewcommand{\ExampleName}{Example}%
  \renewcommand{\ExerciseName}{Exercise}%
  \renewcommand{\RemarkName}{Remark}%
}
\addto\captionsngerman{%
  \renewcommand{\TheoremName}{Theorem}
  \renewcommand{\AcknowledgmentName}{Anerkennungen}
  \renewcommand{\AlgorithmName}{Algorithmus}
  \renewcommand{\AssumptionName}{Voraussetzung}
  \renewcommand{\AssumptionsName}{Voraussetzungen}
  \renewcommand{\AxiomName}{Axiom}
  \renewcommand{\ConditionName}{Bedingung}
  \renewcommand{\ConditionsName}{Bedingungen}
  \renewcommand{\CaseName}{Fall}
  \renewcommand{\ClaimName}{Behauptung}
  \renewcommand{\ConjectureName}{Vermutung}
  \renewcommand{\ConclusionName}{Folgerung}
  \renewcommand{\CorollaryName}{Korollar}
  \renewcommand{\CriterionName}{Kriterium}
  \renewcommand{\DefinitionName}{Definition}
  \renewcommand{\LemmaName}{Lemma}
  \renewcommand{\NotationName}{Notation}
  \renewcommand{\ProblemName}{Problem}
  \renewcommand{\PropositionName}{Satz}
  \renewcommand{\SolutionName}{L\"osung}
  \renewcommand{\SummaryName}{Zusammenfassung}
  \renewcommand{\ExampleName}{Beispiel}%
  \renewcommand{\ExerciseName}{\"Ubung}%
  \renewcommand{\RemarkName}{Bemerkung}%
}
\addto\captionsgreek{%
  \renewcommand{\TheoremName}{Je\'wrhma}
  \renewcommand{\AcknowledgmentName}{Euqarist\'ies}
  \renewcommand{\AlgorithmName}{Alg\'orijmos}
  \renewcommand{\AssumptionName}{Up\'ojesh}
  \renewcommand{\AssumptionsName}{Upoj\'eseis}
  \renewcommand{\AxiomName}{Ax\'iwma}
  \renewcommand{\ConditionName}{Pro\"up\'ojesh}
  \renewcommand{\ConditionsName}{Pro\"upoj\'eseis}
  \renewcommand{\CaseName}{Per\'iptwsh}
  \renewcommand{\ClaimName}{Isqurism\'os}
  \renewcommand{\ConjectureName}{Eikas\'ia}
  \renewcommand{\ConclusionName}{Sump\'erasma}
  \renewcommand{\CorollaryName}{P\'orisma}
  \renewcommand{\CriterionName}{Krit\'hrio}
  \renewcommand{\DefinitionName}{Orism\'os}
  \renewcommand{\LemmaName}{L\'hmma}
  \renewcommand{\NotationName}{Shmeiograf\'ia}
  \renewcommand{\ProblemName}{Pr\'oblhma}
  \renewcommand{\PropositionName}{Pr\'otash}
  \renewcommand{\SolutionName}{L\'ush}
  \renewcommand{\SummaryName}{Per\'ilhyh}
  \renewcommand{\ExampleName}{Par\'adeigma}%
  \renewcommand{\ExerciseName}{\'Askhsh}%
  \renewcommand{\RemarkName}{Parat\'hrhsh}%
}

\makeatletter
\renewenvironment{proof}[1][\proofname]{\par
\pushQED{\hfill$\blacksquare$}%
\normalfont \topsep6\p@\@plus6\p@\relax
\trivlist
\item\relax
{\bfseries
#1\@addpunct{.}}\hspace\labelsep\ignorespaces
}{%
\popQED\endtrivlist\@endpefalse
}
\makeatother

\newcommand{\strongly}{%
  \mathrel{
    \resizebox{1.4em}{0.88ex}{$\rightarrow$}
}}


\newcommand{\wstar}{%
  \mathrel{\vbox{\offinterlineskip\ialign{%
    \hfil##\hfil\cr
    $\scriptstyle*\,$\cr
    \noalign{\kern 0.12ex}
    \resizebox{1.4em}{0.88ex}{$\rightharpoonup$}\cr
}}}}

\newcommand{\twoweakly}{%
  \mathrel{\vbox{\offinterlineskip\ialign{%
    \hfil##\hfil\cr
    $\scriptstyle2\,$\cr
    \noalign{\kern 0.12ex}
    \resizebox{1.4em}{0.88ex}{$\rightharpoonup$}\cr
}}}}

\newcommand{\twostrongly}{%
  \mathrel{\vbox{\offinterlineskip\ialign{%
    \hfil##\hfil\cr
    $\scriptstyle2\,$\cr
    \noalign{\kern 0.12ex}
    \resizebox{1.4em}{0.88ex}{$\rightarrow$}\cr
}}}}


\def\XXint#1#2#3{{\setbox0=\hbox{$#1{#2#3}{\int}$ }\hspace{0.17em}
\vcenter{\hbox{$#2#3$ }}\kern-.585\wd0}}

\topmargin -1.5cm  \textheight 24.5cm
\textwidth 16.5cm \hoffset -1.4cm

\DeclareMathOperator*{\clconv}{\overline{conv}}

\DeclareMathOperator{\adj}{adj}

\newcommand{\lnorm}{\left\|}
\newcommand{\rnorm}{\right\|}

\makeatletter

\makeatother

\makeatletter
\newcommand{\defeq}{\mathrel{\hspace{-0.1ex}\mathrel{\rlap{\raisebox{0.3ex}{$\m@th\cdot$}}\raisebox{-0.3ex}{$\m@th\cdot$}}\hspace{-0.73ex}\resizebox{0.55em}{0.84ex}{$=$}\hspace{0.67ex}}\hspace{-0.25em}}
\makeatother
\makeatletter
\newcommand{\eqdef}{\hspace{-0.25em}\mathrel{\hspace{0.67ex}\resizebox{0.55em}{0.84ex}{$=$}\hspace{-0.73ex}\mathrel{\rlap{\raisebox{0.3ex}{$\m@th\cdot$}}\raisebox{-0.3ex}{$\m@th\cdot$}}\hspace{-0.1ex}}}
\makeatother
\usepackage{tikz}
\makeatletter

\makeatother

\usepackage[colorinlistoftodos]{todonotes}
\usepackage[author={A.S.},color=yellow]{pdfcomment}
\DTMsetregional 

\newcommand{\refpart}[2]{\hyperref[#1]{\ref*{#1}--\textit{#2.}}}



\begin{document}
\selectlanguage{english}

\title{On the existence of symmetric minimizers
}
\author{Athanasios Stylianou\thanks{Institut f\"{u}r Mathematik, Universit\"{a}t Kassel, 34132 Kassel, Germany}
}
\maketitle

\footnotetext[1]{\textbf{Keywords:} Inheritance of symmetry, Haar measure, $G$-average}
\footnotetext[2]{\textbf{2010 AMS Subject Classification:} 35B06, 58D19, 46G10 }

\begin{abstract}
 In this note we revisit a less known symmetrization method for functions with respect to a topological group, which we call $G$-averaging. We note that, although quite non-technical in nature, this method yields $G$-invariant minimizers of functionals satisfying some relaxed convexity properties. We give an abstract theorem and show how it can be applied to the $p$-Laplace and polyharmonic Poisson problem in order to construct symmetric solutions. We also pose some open problems and explore further possibilities where the method of $G$-averaging could be applied to.
\end{abstract}

\section{Introduction}
Identifying symmetries of problems has always been of importance in the deeper understanding of mathematical and physical problems. As Kawohl wrote in \cite{Kawohl1998}: ``Many problems in analysis appear symmetric, yet in fact their solutions are sometimes nonsymmetric.'' It has been a subject of thorough study in the theory of partial differential equations, to find conditions that imply the inheritance of the symmetries of the problem to its solutions. For example Dacorogna \textit{et al} in \cite{DacorognaGangboEtAl1992} and subsequently Belloni and Kawohl in \cite{BelloniKawohl1999} studied symmetry properties of minimizers of problems related to best Sobolev constants and isoperimetric problems in crystallography. Symmetries of systems of semilinear equations in the context of criticality, were studied by Bozhkov and Mitidieri in \cite{BozhkovMitidieri2007}, using Poho\v zaev's and Noether's identities and Lie group theory.  When it comes to quasilinear equations, D'Ambrosio \textit{et al} in \cite{DAmbrosioFarinaEtAl2013} studied the symmetry properties of distributional solutions. Lastly, Kr\"omer studied the radial symmetries of non-convex functionals in \cite{Kroemer2008}.

A number of methods exist in order to prove inheritance of symmetries, many of them are described in \cite{Kawohl1998} and in the more recent survey \cite{Weth2010}. Here we will try to give a short description of the available tools, that have been so far developed an implemented; this list of references is by far not exhaustive.

Maybe one of the most widely developed one is Alexandrov's moving plane method \cite{Alexandrov1962}, adapted to pdes by Serrin in \cite{Serrin1971} and further used by Gidas, Ni and Nirenberg in \cite{GidasNiNirenberg1979,GidasNiNirenberg1981} to prove that positive solutions to the semilinear Poisson problem in a ball, are radial. This method has been further refined by Cl\'ement and Sweers in \cite{ClementSweers1989} to include subsolutions and by Kawohl and Sweers in \cite{KawohlSweers2002} to include Steiner-symmetric domains. In the case of exterior domains, we refer to the work \cite{Reichel1997} of Reichel and to the work \cite{Sirakov2001} of Sirakov. For semilinear systems in bounded domains see Troy \cite{Troy1981} and for the unbounded case the work \cite{BuscaSirakov2000} of Busca and Sirakov. For polyharmonic semilinear equations see Gazzola, Grunau and Sweers' monograph \cite{GazzolaGrunauEtAl2010}. For quasilinear equations we refer to Serrin and Zou \cite{SerrinZou1999}. Da Lio and Sirakov in \cite{DaLioSirakov2007} extended the method to viscosity solutions to fully nonlinear equations. Fleckinger and Reichel in \cite{FleckingerReichel2005} proved radiality of global solution branches for problems involving the $p$-Laplacian on balls. Herzog and Reichel treated elliptic systems in ordered Banach spaces in \cite{HerzogReichel2012}. For more details and for an application of to $p$-Laplace equations see also Brezis' survey \cite{Brezis1999}, Pacella and Ramaswamy's survey \cite{PacellaRamaswamy2008} or Fraenkel's monograph \cite{Fraenkel2000}. For more details on the maximum principle and related symmetry results we also refer to Pucci and Serrin's monograph \cite{PucciSerrin2007}. One can also use symmetry methods to prove nonexistence results, as did Poho\v zaev (\cite{Pohozaev1965} or \cite[Chapter III, 1]{Struwe2008} and references therein) or Reichel and Zou in \cite{ReichelZou2000}.

Moreover, a reflection method was considered by Lopes in \cite{Lopes1996_1,Lopes1996_2} and further developed by Mari\c{s} in \cite{Maris2009} in order to prove symmetry of constrained minimizers that possess either some continuation properties or increased regularity. Later works using this method include the one by Jeanjean and Squassina \cite{JeanjeanSquassina2009} and references therein.

Another method consists in the so-called technique of symmetrization and symmetric rearrangements and foliations; see for example Kawohl's monograph \cite{Kawohl1985} and the surveys of Brock \cite{Brock2007} and Pacella and Ramaswamy \cite{PacellaRamaswamy2008}. Symmetrization techniques can also find application in minimax problems, as Van Schaftingen proved in \cite{VanSchaftingen2005}. Concerning Schwarz symmetrization, we would also like to mention Hajaiej and Kr\"omer's recent work \cite{HajaiejKroemer2012} and the references therein.

All the above methods are quite technical and often make assumptions which are similar in nature. However, in some cases, symmetry can be obtained via short and elegant arguments. For example, uniqueness of solutions to a symmetric problem leads directly to invariance; otherwise any symmetric transformation of a solution will also be one. Or, that minimizers of strictly convex functionals must inherit the symmetry of the problem as shown by Montenegro and Valdinoci in \cite{MontenegroValdinoci2012}.

Finally we would also like to refer to the work of Pacella and Weth \cite{PacellaWeth2007} for proving symmetry results via Morse theory, and to the work of Jarohs and Weth \cite{JarohsWeth2016} for nonlocal problems.

The present article is of a similar nature. We consider an averaging type symmetrization of a function and its properties when it comes to inheritance of symmetry. This idea was firstly used by Nicolaescu in \cite{Nicolaescu1988} in order to prove radiality of optimal controls for increasing cost functionals controlled by a semilinear Poisson state equation. In \cite{Kroemer2008}, Kr\"omer applied it also to prove radiality of minimizers. It was also used by Struwe in his monograph \cite[3.3 Remark, p.82]{Struwe2008} to construct symmetric pseudo-gradient flows for functionals on Banach spaces. Up to our knowledge, it has not been used in its full generality to prove inheritance of symmetry to solutions, despite  its compact and elegant form. (The method in the radial cases above will follow from the one presented here if one assumes a mean value theorem for integrals; see Section \ref{mean_value}.) The aim of this note is to explore the possibilities that this method may have, when it comes to finding symmetric solutions.

\section{Existence of invariant minimizers}
We begin by making clear what we mean by the $G$-averaging of an element of a general Banach space. We will later make this more concrete by applying the method on appropriate function spaces.
\begin{definition}
 Let $X$ be a Banach space, $C\subseteq X$ closed and convex and $G$ a compact topological group acting continuously on $C$. Moreover, let $\theta$ denote the Haar-measure on $G$ (the unique left- and right-invariant probability measure with respect to the Borel $\sigma$-algebra on $G$) and $u\in C$. Define the $G$-average of $u$ by
 \begin{equation}\label{ug}
  u_G\defeq \int_G g\cdot u\; d\theta(g),
 \end{equation}
 where the integral is in the sense of Bochner.
\end{definition}
\begin{example}[{\protect\cite[Proposition 4.2]{Nicolaescu1988}}] For $X=C[-1,1]$ we get
 \begin{equation*}
  u_{O(1)}(x)=\frac{1}{2}\big(u(x)+u(-x)\big),
 \end{equation*}
 whereas for $X=C(\overline \Omega)$ with $\Omega\defeq\{z\in \mathbb C: |z|<1\}$ we get
 \begin{equation*}
  u_{O(2)}(z)=\frac{1}{4\,\pi}\int_0^{2\pi}\Big(u\big(e^{i\phi}\,z\big)+u\big(e^{i\phi}\,\overline z\big)\Big)\;d\phi.
 \end{equation*}
\end{example}
Still, it is not directly clear why the $G$-average is well-defined in the abstract setting. This problem is dealt with in the next lemma.
\begin{lemma}\label{inC}
 Let $X$ be a Banach space, $C\subseteq X$ closed and convex and $G$ a compact topological group acting continuously on $C$. Moreover, let $u\in C$. Then the $G$-average of $u$ (defined by \eqref{ug}) satisfies $u_G\in C$.
\end{lemma}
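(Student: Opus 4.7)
The plan is to first establish that the integrand $g \mapsto g \cdot u$ is actually Bochner integrable, and then to show that the resulting integral sits in $C$ by a Hahn--Banach separation argument.

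For the first step, I would argue as follows. Since the action $G \times C \to C$ is continuous, the orbit map $\Phi_u : G \to X$, $g \mapsto g \cdot u$, is continuous. Because $G$ is compact, $\Phi_u(G)$ is a compact subset of $X$, hence norm-separable, so $\Phi_u$ is strongly measurable with respect to the Borel $\sigma$-algebra. Moreover $g \mapsto \|g \cdot u\|_X$ is continuous on compact $G$ and therefore bounded, so $\int_G \|g \cdot u\|_X \, d\theta(g) < \infty$. By Bochner's criterion, $\Phi_u$ is Bochner integrable and $u_G$ is well-defined as an element of $X$.

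The main step is to show $u_G \in C$. I would argue by contradiction: suppose $u_G \notin C$. Since $C$ is closed and convex and $\{u_G\}$ is a compact convex set disjoint from $C$, the Hahn--Banach separation theorem yields $\ell \in X^*$ and $\alpha \in \mathbb{R}$ with
\begin{equation*}
\ell(u_G) > \alpha \geq \ell(v) \quad \text{for all } v \in C.
\end{equation*}
Because $\ell$ is a continuous linear functional, it commutes with the Bochner integral, giving
\begin{equation*}
\ell(u_G) = \int_G \ell(g \cdot u) \, d\theta(g) \leq \int_G \alpha \, d\theta(g) = \alpha,
\end{equation*}
where in the inequality I used that $g \cdot u \in C$ for every $g \in G$ (since the action maps $C$ to $C$) and that $\theta$ is a probability measure. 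This contradicts $\ell(u_G) > \alpha$, so $u_G \in C$.

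The only subtle point is the use of Hahn--Banach, which needs the real scalar field — if one works over $\mathbb{C}$, one passes to the real part of the functional in the usual way. Everything else is standard once Bochner integrability is secured, and I do not anticipate any real obstacle: the statement is essentially the fact that Bochner integrals against probability measures take values in the closed convex hull of the integrand's range.
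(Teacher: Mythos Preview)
Your proof is correct and follows essentially the same route as the paper: both verify Bochner integrability via continuity of the orbit map and compactness of $G$, and then use that a Bochner integral against a probability measure lands in the closed convex hull of the integrand's range. The only difference is cosmetic: the paper cites this last fact from \cite[Proposition 1.2.12]{HytoenenVanNeervenEtAl2016} to conclude $u_G\in\clconv Gu\subseteq C$, whereas you unpack its standard Hahn--Banach proof directly with $C$ in place of $\clconv Gu$.
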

\begin{proof}
 Since $G$ acts on $C$ we have that $g\cdot u\in C$ for all $g\in G$ and since $C$ is closed and convex holds that $\clconv G u\subseteq C$. Moreover, due to the continuity of the $G$-action we get that the real function $g\mapsto \lnorm g\cdot u\rnorm_X$ is continuous. Since $G$ is compact we estimate
 \begin{equation}
  \int_G\lnorm g\cdot u\rnorm_X\;d\theta(g)\leq \max_{g\in G} \lnorm g\cdot u\rnorm_X\, \int_G\;d\theta(g)<\infty,
 \end{equation}
 since $\theta$ is a probability measure and the function $g\mapsto \lnorm g\cdot u\rnorm_X$ attains its maximum in the compact space $G$. Thus the mapping $g\mapsto g\cdot u$ is Bochner integrable. But then \cite[Proposition 1.2.12]{HytoenenVanNeervenEtAl2016} implies that $u_G\in \clconv G u$ and thus $u_G\in C$.
\end{proof}

We can now prove the main abstract result on the inheritance of symmetry from a minimization problem to its minimizers. The importance of the result lies in the fact that we neither impose regularity nor any kind of maximum principle. Our assumptions only include some convexity and invariance. Although $1.$ and $2.$ in the theorem below are partially overlapping, the reason for this formulation is to illustrate different approaches: Jensen's inequality is not indispensable to the proof.
\begin{theorem}\label{th1}
 Let $X$ be a Banach space, $C\subseteq X$ closed and convex and $G$ a compact topological group acting continuously on $C$. Assume that $F:X\strongly \mathbb R$ has a minimizer in $C$ and that it is $G$-invariant with respect to the minimizer, i.e., $F(g\cdot u)=F(u)$ for all $g\in G$, where $u\in C$ is the (local) minimizer. Moreover assume that either
 \begin{enumerate}
  \item $F$ is convex, and either continuous at $u_G$ or lower semi-continuous in $C$, or
  \item $F$ is lower semi-continuous and convex in $\clconv G u$.
 \end{enumerate}
 Then $u_G$ is a $G$-invariant minimizer of $F$ in $C$.
\end{theorem}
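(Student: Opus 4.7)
The plan is to verify three properties of $u_G$: (a) $u_G\in C$, (b) $u_G$ is $G$-invariant, and (c) $F(u_G)\leq F(u)$. From (a) and (c) combined with the fact that $u$ is a (local) minimizer of $F$ in $C$, $u_G$ must itself be a (local) minimizer, and by (b) it is $G$-invariant. Part (a) is exactly the conclusion of Lemma \ref{inC}.

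For (b), I would interpret each element $h\in G$ as acting by the restriction to $C$ of a bounded linear operator on $X$ (this is the situation in the examples and, more generally, the natural reading of the hypotheses). Then $h\cdot$ commutes with the Bochner integral, and left-invariance of the Haar measure $\theta$ gives, for every $h\in G$,
\begin{equation*}
h\cdot u_G = \int_G h\cdot(g\cdot u)\,d\theta(g) = \int_G (hg)\cdot u\,d\theta(g) = \int_G g\cdot u\,d\theta(g) = u_G,
\end{equation*}
where the penultimate step is the change of variable $g\mapsto h^{-1}g$.

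For (c), the key observation is that $F$ is constant and equal to $F(u)$ on the orbit $G\cdot u$ by the assumed $G$-invariance at $u$, and that $u_G\in\clconv(G\cdot u)$ by Lemma \ref{inC}. Under hypothesis 2, convexity of $F$ on $\clconv(G\cdot u)$ yields $F(v)\leq \sum_i\lambda_i F(g_i\cdot u)=F(u)$ for every finite convex combination $v=\sum_i\lambda_i\, g_i\cdot u$; choosing a sequence $v_n\in\conv(G\cdot u)$ with $v_n\to u_G$ and invoking lower semi-continuity then gives $F(u_G)\leq \liminf_n F(v_n)\leq F(u)$. Under hypothesis 1 the same convex-combination argument applies, and one passes to the limit either by continuity of $F$ at $u_G$ or by lower semi-continuity on $C$. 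Alternatively, and more compactly, in case 1 one can invoke Jensen's inequality for Bochner integrals to obtain
\begin{equation*}
F(u_G) \leq \int_G F(g\cdot u)\,d\theta(g) = F(u).
\end{equation*}

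The main technical subtlety is step (b): commutation with the Bochner integral rests on the $G$-action being (the restriction of) a linear one, so this should either be flagged as a standing interpretation or verified in the concrete applications. A lesser concern is the measurability of $g\mapsto F(g\cdot u)$ in the Jensen variant, which is immediate from continuity of the action together with the regularity of $F$. Once these points are granted, the argument is essentially algebraic and mirrors the finite-dimensional fact that the centroid of a symmetric configuration inherits its symmetries.
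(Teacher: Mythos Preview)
Your argument is correct and follows essentially the same route as the paper: invariance of $u_G$ via left-invariance of the Haar measure, then $F(u_G)\leq F(u)$ via Jensen's inequality (case~1) or via approximation by convex combinations plus lower semi-continuity (case~2), combined with Lemma~\ref{inC}. Two minor differences are worth noting: the paper also offers a subdifferential variant in case~1 (integrate the inequality $F(g\cdot u)\geq F(u_G)+\langle\beta^*,g\cdot u-u_G\rangle$ over $G$), and in case~2 the paper writes the approximating sequence as two-point combinations $t_n\,g_{1,n}\cdot u+(1-t_n)\,g_{2,n}\cdot u$, whereas your use of general finite convex combinations is the technically correct formulation. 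Your explicit flagging of the linearity assumption needed to pull $h\cdot$ through the Bochner integral is a point the paper passes over silently.
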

\begin{proof}
 First note that $u_G$ is $G$-invariant: due to the (left) invariance of the Haar measure we have for any $\tilde g\in G$ that
 \begin{equation*}
  \tilde g \cdot u_G=\int_G (\tilde g\cdot g)\cdot u\; d\theta(g)=\int_G g\cdot u\; d\theta(g)=u_G.
 \end{equation*}
 We consider the cases separately:
 
%
 \textit{1.} Since $F$ is a convex functional we can either apply a version of Jensen's inequality for the Bochner integral (\cite[Proposition 1.2.11]{HytoenenVanNeervenEtAl2016}):
 \begin{align*}
  F(u_G)={}&F\Big(\int_G g\cdot u\; d\theta(g)\Big)\leq \int_G F(g\cdot u)\; d\theta(g)\\
  ={}&\int_G F(u)\; d\theta(g)=F(u)\int_G\; d\theta(g)=F(u),
 \end{align*}
 or use the fact that the subdifferential of $F$ at $u_G$ is not empty:
 \begin{equation*}
  F(g\cdot u)\geq F(u_G)+\langle \beta^*,g\cdot u-u_G\rangle_{X^*,X}
 \end{equation*}
 for all $g\in G$, for a nonzero $\beta^*\in \partial F(u)$. Integrating the above in $G$ with respect to the Haar measure we obtain $F(u_G)\leq F(u)$ and, together with Lemma \ref{inC}, that $u_G$ is a minimizer in $C$.
 
 \textit{2.} Since $u_G\in \clconv G u$, there exists a sequence $\{t_n\,g_{1,n}\cdot u+(1-t_n)\,g_{2,n}\cdot u\}_{n\in\mathbb N}$, where $g_{1,n},g_{2,n}\in G$, strongly converging to $u_G$. Due to the semi-continuity and convexity of $F$ we get
 \begin{align*}
  F(u_G)={}& F\Big(\lim_{n\rightarrow\infty} \big(t_n\,g_{1,n}\cdot u+(1-t_n)\,g_{2,n}\cdot u\big)\Big)\\
  \leq{}&\lim_{n\rightarrow\infty} F\big(t_n\,g_{1,n}\cdot u+(1-t_n)\,g_{2,n}\cdot u\big)\\
  \leq{}& \lim_{n\rightarrow\infty}\Big(t_n\,F\big(g_{1,n}\cdot u)+(1-t_n)\,F(g_{2,n}\cdot u\big)\Big)=F(u).
 \end{align*}
 Again, using Lemma \ref{inC}, we infer the minimality of $u_G$.
\end{proof}

Next we apply the above abstract theorem in more precise setting. Namely, we prove the inheritance of symmetry from the domain to weak solutions to the $p$-Laplace Poisson problem, only assuming an decreasing nonlinearity.
\begin{corollary}\label{cor1}
 Let $\Omega\subset \mathbb R^n$ be a bounded Lipschitz domain and let $G$ denote the subgroup of $O(n)$ corresponding to the symmetries of $\Omega$. Assume that $G$ is compact and that the functional $F:W^{1,p}_0(\Omega)\strongly \mathbb R$ defined by
 \begin{equation}\label{eq1}
  F(u)\defeq \int_\Omega\Big(\frac{1}{p}\,|\nabla u|^p-f(u)\Big)\;dx,
 \end{equation}
 with $f:\mathbb R\strongly\mathbb R$ concave and $p>1$, possesses a minimizer. Then $F$ possesses a $G$-invariant minimizer.
\end{corollary}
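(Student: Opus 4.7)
The plan is to apply Theorem \ref{th1}, case \textit{1.}, with $X = C = W^{1,p}_0(\Omega)$ and $G$ acting on $X$ via $(g\cdot u)(x) \defeq u(g^{-1}x)$. Since $\Omega$ is $G$-invariant, this action sends $W^{1,p}_0(\Omega)$ into itself, and since every $g\in G\subseteq O(n)$ is a Euclidean isometry, each map $u\mapsto g\cdot u$ is a linear isometry of $W^{1,p}_0(\Omega)$. Continuity of $g\mapsto g\cdot u$ for fixed $u$ follows from density of $C_c^\infty(\Omega)$ in $W^{1,p}_0(\Omega)$ combined with a standard three-epsilon argument, and jointly with the isometry property this makes the action $G\times C\to C$ continuous, as required by Theorem \ref{th1}.

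I would next verify the $G$-invariance of $F$ on all of $W^{1,p}_0(\Omega)$. For $u\in W^{1,p}_0(\Omega)$ and $g\in G$, the weak chain rule yields $\nabla(g\cdot u)(x) = g\,\nabla u(g^{-1}x)$ almost everywhere, and the orthogonality of $g$ then gives $|\nabla(g\cdot u)(x)|^p = |\nabla u(g^{-1}x)|^p$. Combined with $f((g\cdot u)(x)) = f(u(g^{-1}x))$, the change of variables $y = g^{-1}x$ (whose Jacobian determinant has absolute value one) and the invariance $g\Omega = \Omega$ imply $F(g\cdot u) = F(u)$ for every $g\in G$.

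Finally, I would check the remaining hypotheses. The Dirichlet term $u\mapsto \frac{1}{p}\int_\Omega |\nabla u|^p\,dx$ is convex, since $|\cdot|^p$ is convex for $p>1$ and integration preserves convexity, and it is continuous on $W^{1,p}_0(\Omega)$. The nonlinear term $u\mapsto -\int_\Omega f(u)\,dx$ is convex because $-f$ is convex; it is lower semi-continuous because $f$, being concave on $\mathbb{R}$, is automatically continuous and because $W^{1,p}_0(\Omega)$ embeds compactly into $L^p(\Omega)$ by Rellich--Kondrachov, so Fatou's lemma applied to strongly convergent sequences delivers the required semi-continuity. Hence $F$ is convex and lower semi-continuous on $C$, and Theorem \ref{th1} produces the desired $G$-invariant minimizer $u_G \in C$. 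The only mildly delicate step is the combination of orthogonality with the chain rule to obtain $G$-invariance of $F$; everything else is routine calculus-of-variations bookkeeping.
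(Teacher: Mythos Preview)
Your proof is correct and follows essentially the same route as the paper: define the action by pullback, verify that it is isometric and continuous on $W^{1,p}_0(\Omega)$ via density of smooth functions plus a three-epsilon argument, check $G$-invariance of $F$ from orthogonality and the change of variables, and invoke Theorem~\ref{th1}. The only cosmetic differences are your use of $g^{-1}$ rather than $g$ in the action (a harmless convention) and your more explicit verification of convexity and lower semi-continuity of $F$, which the paper leaves implicit.
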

\begin{proof}
 Define the action of $G$ on $W^{1,p}_0(\Omega)$ by composition (pullback) as
 \begin{equation*}
  (g\cdot u) (x)\defeq u(g\,x)
 \end{equation*}
 and note that the standard norm in $W^{1,p}_0(\Omega)$ is $G$-invariant: Since orthogonal matrices preserve lengths we calculate
 \begin{equation*}
  \big|\nabla\big(u(g\,x)\big)\big|=\big|g^\top\,\nabla u(g\,x)\big|=|\nabla u(g\,x)|
 \end{equation*}
 and keep in mind that $|\det g|=1$. We only need to prove the continuity of the action, which, although similarly elementary in nature, we include for the sake of completeness. Let $v\in C_0^\infty(\Omega)$ and $g_1,g_2\in G$. Since $v$ and its first derivatives are Lipschitz continuous we get that
 \begin{equation*}
  \lnorm g_1\cdot v-g_2\cdot v\rnorm_{1,p}\leq C_v\,d(g_1,g_2),
 \end{equation*}
 for some constant $C_v>0$. Here $d$ denotes the standard (Euclidean) metric in $O(n)$. Let $\varepsilon>0$ and pick any $u_1,u_2\in W^{1,p}_0(\Omega)$ with $\lnorm u_1-u_2\rnorm_{1,p}<\varepsilon/4$, $v\in C_0^\infty(\Omega)$ with $\lnorm u_1-v\rnorm_{1,p}<\varepsilon/4$ and $g_1,g_2\in G$ with $d(g_1,g_2)< \varepsilon/(4\,C_v)$. We then obtain that
 \begin{align*}
  {}&\lnorm g_1\cdot u_1-g_2\cdot u_2\rnorm_{1,p}\leq  \lnorm g_1\cdot u_1-g_2\cdot u_1\rnorm_{1,p}+\lnorm g_2\cdot u_1-g_2\cdot u_2\rnorm_{1,p}\\
   {}&\qquad\qquad= \lnorm g_1\cdot u_1-g_2\cdot u_1\rnorm_{1,p}+\lnorm g_2\cdot u_1-g_2\cdot u_2\rnorm_{1,p}\\  
   {}&\qquad\qquad\leq \lnorm g_1\cdot u_1-g_1\cdot v\rnorm_{1,p}+\lnorm g_1\cdot v-g_2\cdot v\rnorm_{1,p}+\lnorm g_2\cdot v-g_2\cdot u_1\rnorm_{1,p}\\
   {}&\qquad\qquad\qquad+\lnorm g_2\cdot u_1-g_2\cdot u_2\rnorm_{1,p}\\  
   {}&\qquad\qquad=\lnorm u_1-v\rnorm_{1,p}+C_v\,d(g_1,g_2)+\lnorm v-u_1\rnorm_{1,p}+\lnorm u_1-u_2\rnorm_{1,p}\\  
   {}&\qquad\qquad<\frac{\varepsilon}{4}+C_v\,\frac{\varepsilon}{4\,C_v}+\frac{\varepsilon}{4}+\frac{\varepsilon}{4}=\varepsilon,
 \end{align*}
 i.e., the action of $G$ on $W^{1,p}_0(\Omega)$ is continuous. Thus, applying Theorem \ref{th1} we obtain the existence of a $G$-invariant minimizer.
\end{proof}
\begin{remark}
 The group $G$ is supposed to include any possible symmetries of the domain, that is, rotations with respect to a fixed angle and a vector, reflections with respect to any hyperplane and of course rigid motions. As long as all these symmetries form a compact subgroup of $O(n)$ (which is for example the case for any finite subgroup, or for any subgroup corresponding to rotations around any number of vectors, since it will be isomorphic to some $SO(m)$, $m\leq n$), the $G$-average of a minimizer will be symmetric with respect to all of the symmetries of the domain.
\end{remark}
\begin{remark}
 The moving plane method relies strongly on the existence of some kind of comparison principle for solutions. But such results are generally not available for higher order pdes. Still, if one restricts the problem to balls, it is possible to substitute the use of comparison principles by kernel estimates and monotonicity properties of the biharmonic Green function (see \cite[Section 7.1]{GazzolaGrunauEtAl2010}). It is not clear how to extend this to general domains, since the a formula for the Green function is explicitly available only for balls (\cite[Section 1.2]{GazzolaGrunauEtAl2010}). The $G$-averaging method does not rely on such results and yields directly symmetric minimizers, as shown in the next corollary.
\end{remark}
\begin{corollary}
 Let $\Omega\subset \mathbb R^n$ be a bounded Lipschitz domain and let $G$ denote the subgroup of $O(n)$ corresponding to the symmetries of $\Omega$. Assume that $G$ is compact and that the functional $\displaystyle F:H^{2\,m}_0(\Omega)\strongly \mathbb R$ defined by
 \begin{equation}\label{eq2}
  F(u)\defeq \int_\Omega\Big(\frac{1}{2}\,(\Delta^m u)^2-f(u)\Big)\;dx,
 \end{equation}
 with $f:\mathbb R\strongly\mathbb R$ concave and $m\in\mathbb N$, possesses a minimizer. Then $F$ possesses a $G$-invariant minimizer.
\end{corollary}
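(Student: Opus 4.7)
The plan is to mirror the proof of Corollary~\ref{cor1} almost verbatim, with the only new ingredient being the $G$-invariance of the leading-order term $\int_\Omega (\Delta^m u)^2\,dx$ under the pullback action.

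First I would define $(g\cdot u)(x)\defeq u(gx)$ on $H^{2m}_0(\Omega)$ and verify that this action is isometric. The key algebraic fact is that for any $g\in O(n)$ and any smooth $u$, the chain rule combined with $gg^\top=I$ gives
\begin{equation*}
  \Delta(g\cdot u)(x)=\sum_{i,j,k}g_{ji}g_{ki}(\partial_j\partial_k u)(gx)=\sum_{j,k}\delta_{jk}(\partial_j\partial_k u)(gx)=(\Delta u)(gx),
\end{equation*}
i.e.\ $\Delta(g\cdot u)=g\cdot\Delta u$. Iterating yields $\Delta^m(g\cdot u)=g\cdot\Delta^m u$, and density of $C_0^\infty(\Omega)$ in $H^{2m}_0(\Omega)$ extends the identity to the whole Sobolev space. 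Combined with $|\det g|=1$ and $g\Omega=\Omega$, the change of variables formula then yields $\|\Delta^m(g\cdot u)\|_{L^2}=\|\Delta^m u\|_{L^2}$. The analogous computation on each derivative tensor $D^k(g\cdot u)$, using that $g^{\otimes k}$ is orthogonal on $(\mathbb{R}^n)^{\otimes k}$, shows that the entire $H^{2m}$ norm is $G$-invariant, so the pullback is an isometry. Continuity of the action then follows by the same triangle inequality argument used in Corollary~\ref{cor1}, which transcribes without modification.

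Next I would check that $F$ is convex on $H^{2m}_0(\Omega)$. Since $u\mapsto\Delta^m u$ is linear, $u\mapsto\tfrac{1}{2}\int_\Omega(\Delta^m u)^2\,dx$ is a convex quadratic form; the concavity of $f$ makes $u\mapsto-\int_\Omega f(u)\,dx$ convex via pointwise composition; hence $F$ is convex. Lower semi-continuity is immediate for the quadratic part (it is norm-continuous) and for the nonlinear part follows from the Sobolev embedding of $H^{2m}_0(\Omega)$ into a suitable $L^q(\Omega)$ together with the convexity of $-f$. The isometry above then shows that $F$ is $G$-invariant, so the hypotheses of Theorem~\ref{th1} (case~1) are satisfied and produce a $G$-invariant minimizer $u_G$.

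The main obstacle I expect is the density argument extending the pointwise identity $\Delta^m(g\cdot u)=g\cdot\Delta^m u$ from $C_0^\infty(\Omega)$ to $H^{2m}_0(\Omega)$, and then converting it into the $L^2$-isometry needed to invoke Theorem~\ref{th1}. This is technically routine once one verifies that the pullback is continuous in the $H^{2m}$ topology (which the calculation above supplies), but it is the only place where any argument beyond a direct transcription of the proof of Corollary~\ref{cor1} is required.
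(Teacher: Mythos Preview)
Your proposal is correct and follows essentially the same approach as the paper: define the pullback action, use the orthogonal invariance of the Laplacian (hence of $\Delta^m$) to obtain $G$-invariance of $F$, observe that $F$ is convex, and apply Theorem~\ref{th1}. The paper's own proof is a three-line sketch that leaves implicit precisely the details you spell out (the chain-rule computation, the density extension, and the verification of convexity/lower semi-continuity), so your write-up is a faithful elaboration rather than a different route.
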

\begin{proof}
 Define as in the proof of Corollary \ref{cor1} the action of the group via composition and note that it is continuous in $\displaystyle H^{2\,m}_0(\Omega)$. Moreover, since the Laplacian is invariant with respect to orthogonal transformations, so is its $m$-th power. Finally, $F$ is a convex functional so that Theorem \ref{th1} proves the claim.
\end{proof}
\begin{remark}
 Since the method of $G$-averaging works for minimizers, the assumptions for right-hand sides are relaxed compared to the moving plane method. Writing the strong Euler-Lagrange equations for \eqref{eq1} and \eqref{eq2}:
 \begin{equation*}
  \bigg\{
  \begin{aligned}
   -\Delta_p u ={}& f'(u) {}&& \text{ in } \Omega\\
   u={}&0{}&& \text{ on }\partial \Omega
  \end{aligned}\quad\text{ and }\quad
  \bigg\{
  \begin{aligned}   
   \Delta^{2\,m} u ={}& f'(u) {}&& \text{ in } \Omega\\
   \partial^\alpha u|_{\partial\Omega}={}&0{}&& \text{ for }|\alpha|\leq 2\,m-1
  \end{aligned}
 \end{equation*}
remark that we only assume a decreasing $f'$, whereas one would normally assume Lipschitz continuous right-hand sides when applying the moving plane method. Still, there are refinements of the latter assumption; discontinuous nonlinearities satisfying some technical assumptions are treated in \cite[Section 3.2]{Fraenkel2000}.
\end{remark}

\section{Further possible applications of the $G$-averaging}
In what follows we will present some open questions and possible generalizations and applications of the presented method.

\subsection{A mean value theorem in Banach spaces}\label{mean_value} If one assumes that a ``strong'' mean value theorem is true, that is, there exists $g_u\in G$ such that $u_G=g_u\cdot u$ for a minimizer $u$, then we can directly see that
$$F(u_G)=F(g_u\cdot u)=F(u), \text{ and } u=(g_u^{-1}\cdot g_u)\cdot u=g_u^{-1}\cdot u_G=u_G.$$
In general, it is not possible to obtain anything better than the assertion
\begin{equation*}
 \frac{1}{\mu(A)}\int_A f(x)\;d\mu(x)\in\clconv f(A)
\end{equation*}
for a Bochner integrable function $f:\Omega\strongly X$ and a $\mu$-measurable $A\subset \Omega$. One way to prove this is using the Hahn-Banach separation theorem to arrive to a contradiction (see \cite[Proposition 2.1.21]{GasinskiPapageorgiou2006}). Would it be possible to use non-convex separation theorems involving separating functionals in normal cones like the one in \cite{BorweinJofre1998} together with specific connectedness assumptions on the group $G$, to obtain a more accurate result for the $G$-average? Having such, one could relax the convexity assumptions on the functional.

\subsection{Less convexity, more structure} As already mentioned in the introduction, there are methods to prove the symmetry of minimizers that do not assume convexity (but still either some regularity of positivity) for solutions to pdes. Is it possible to obtain a result just by direct substituting the $G$-average into the functional? What is the relation of the $G$-average to other symmetrizations? Is it possible to deal with more general functionals? Answers to these questions will not only enable the study of minimizers, but also of critical points in the spirit of \cite{VanSchaftingen2005}. Taking $G$-averages of a Palais-Smale sequence would directly lead to critical points, since the action of the group $G$ on the underlying space is continuous.

\subsection{Polyconvexity} An interesting application of the $G$-average would be in the context of nonlinear elasticity. This is a vectorial case and as it is well known (see eg. Ciarlet's classical monograph \cite{Ciarlet1988}) that convexity turns out to be quite restrictive: so-called hyperelastic materials are modelled by non-convex energies. That is where the so-called polyconvex functionals are of interest. This notion was firstly introduced by Ball in \cite{Ball1976} and it is a sufficient condition for the weak lower semi-continuity of the energies. Still, the proof of Theorem \ref{th1} does not directly work for polyconvex functionals. We shortly describe the problem and begin with the definition:
 \begin{definition}[{\protect \cite[Definition 5.1]{Dacorogna2008}}]
 A function $f:\mathbb R^{k\times n}\strongly\mathbb R$ is called \emph{polyconvex} if there exists $g:\mathbb R^{\tau(k,n)}\strongly\mathbb R$ convex, such that
 \begin{equation*}
  f(\xi) = g \big(T(\xi)\big),\ \text{ where }\ T(\xi)\defeq(\xi, \adj_2 \xi,\dots,\adj_{\min\{k,n\}} \xi),
 \end{equation*}
 $\adj_s \xi$ denotes the matrix of all $s\times s$ minors of the matrix $\xi\in\mathbb R^{k\times n}$ with $2\leq s \leq \min \{k,n\}$, and
 \begin{equation*}
  \tau(k,n)\defeq \sum_{s=1}^{\min\{k,n\}}\binom{k}{s}\,\binom{n}{s}.
 \end{equation*}
\end{definition}
Let $\Omega\subset \mathbb R^n$ be a bounded Lipschitz domain and for $p>1$ define the functional 
\begin{equation*}
 F:W^{1,p}(\Omega;\mathbb R^k)\strongly \mathbb R,\ \text{ by }\ F(u)\defeq \int_\Omega W(x,u,\nabla u)\;dx,
\end{equation*}
where $W(x,\cdot,\xi)$ is a.e. convex and $G$-invariant and $W(x,s,\cdot)$ is a.e. polyconvex and $G$-invariant (one should also assume some integrability and continuity on $W$, for example that it is a so-called normal integrand). One would like to prove that if $F$ possesses a minimizer in some closed and convex set $C\subseteq W^{1,p}(\Omega;\mathbb R^k)$ (incorporating the boundary conditions), then $F$ possesses a $G$-invariant minimizer.

Arguing as in the proof of the previous theorem, we obtain the action of $G$ on $C$ is continuous and thus from Lemma \ref{inC}, the $G$-average of the minimizer $u$ satisfies $u_G\in C$. Thus it is left to show that $F(u_G)\leq F(u)$ and for that one would hope that polyconvexity allows one to use Jensen's inequality pointwise. However for that one would need to prove that
\begin{equation*}
 \adj_s \nabla u_G=\adj_s \int_G (g\cdot \nabla u)\,g\; d\theta(g)= \int_G \adj_s\big((g\cdot \nabla u)\,g\big)\; d\theta(g),
\end{equation*}
but this does not work since integrals do not respect multiplication. So a way out would be either to prove an assertion in the spirit of part $2.$ of Theorem \ref{th1}, or, assuming that more is known for the minimizer $u$, to provide a more explicit description of its $G$-average.

\def\cprime{$'$}

\end{document}